\newtheorem{thm}{Theorem}[section]
\newtheorem{lem}[thm]{Lemma}
\newtheorem{cor}[thm]{Corollary}
\theoremstyle{definition}
\newtheorem{defn}[thm]{Definition}
\newcommand{\ep}{\epsilon}
\newcommand{\mbb}{\mathbb}
\newcommand{\ov}{\overline}
\newcommand{\pa}{\partial}
\newcommand{\mf}{\mathbb}
\newcommand{\Om}{\Omega}
\newcommand{\al}{\alpha}
\newcommand{\be}{\beta}
\newcommand{\la}{\lambda}
\newcommand{\ti}{\tilde}
\renewcommand{\Re}{\operatorname{Re}}
\newcommand{\Ric}{\operatorname{Ric}}
\numberwithin{equation}{section}
\title{Existence of geodesic spirals for the Kobayashi--Fuks metric on planar domains}
\keywords{Bergman kernel, Kobayashi--Fuks metric, Geodesics}
\subjclass{32F45, 30H20, 32A25}
\author{Debaprasanna Kar}
\address{Department of Mathematics, Indian Institute of Technology Bombay, Mumbai-400076, India}
\email{dkar@math.iitb.ac.in}
\begin{document}
\maketitle

\begin{abstract}
In this note, we discuss the following problem: Given a smoothly bounded strongly pseudoconvex domain $D$ in $\mathbb{C}^n$, can we guarantee the existence of geodesics for the Kobayashi--Fuks metric which ``spiral around" in the interior of $D$? We find an affirmative answer to the above question for $n=1$ when $D$ is not simply connected.
\end{abstract}

\maketitle


\section{Introduction}

An ingredient in Fefferman's proof of boundary smoothness of biholomorphic mappings between two smoothly bounded strongly pseudoconvex domains are geodesics in the Bergman metric that diverge to the boundary \cite{Fefferman}. This naturally leads to the question if there are geodesics in the Bergman metric that stay within a compact set and this was answered affirmatively by Herbort \cite{Herbort1983} for smoothly bounded strongly pseudoconvex domains having infinitely sheeted universal covers. The purpose of this article is to prove an analog of this result for the Kobayashi--Fuks metric on smoothly bounded non-simply connected planar domains. A geodesic spiral, roughly speaking, is a non-closed geodesic which is ``eventually'' contained inside a compact subset of the underlying domain. Before going into a detailed study of geodesic spirals, let us first briefly recall the construction of the Kobayashi--Fuks metric on bounded domains in $\mbb C^n$. For a more comprehensive study of this metric, the readers may consult the article \cite{Borah-Kar}.

For a bounded domain $D\subset\mf{C}^n$, the space
\[
A^2(D)=\left\{\text{$f: D \to \mf{C}$ holomorphic and $\|f\|^2_D:=\int_{D} \vert f\vert^2 \, dV< \infty$} \right\},
\]
where $dV$ is the Lebesgue measure on $\mf{C}^n$, is a closed subspace of $L^2(D)$, and hence is a Hilbert space. It is called the \textit{Bergman space} of $D$. $A^2(D)$ carries a reproducing kernel $K_D(z,w)$ called the \textit{Bergman kernel} for $D$. Let $K_D(z):=K_D(z,z)$ be its restriction to the diagonal of $D$. It is well known (see \cite{Jarn-Pflug-2013}) that 
\begin{align*}
K_D(z)=\sup\big\{|f(z)|^2: f\in A^2(D), \|f\|_D\leq 1\big\}.
\end{align*}
Since $D$ is bounded, one easily sees that $K_D>0$. It is known that $\log K_D$ is a strictly plurisubharmonic function
and thus is a potential for a K\"ahler metric which is called the Bergman metric for $D$ and is given by
\[
ds^2_{D}(z)=\sum_{\al,\be=1}^n g^{D}_{\al\ov \be} (z) \, dz_{\al}d\ov z_{\be},
\]
where
\[
g^{D}_{\al \ov \be}(z)=\frac{\pa^2 \log K_{D}}{\pa z_{\al} \pa \ov z_{\be}}(z).
\]
We denote
\[
G_{D}(z)=\begin{pmatrix}g^{D}_{\al \ov \be}(z)\end{pmatrix}_{n \times n}.
\]
Recall that the components of the Ricci tensor of $ds^2_{D}$ are defined by
\begin{equation*}\label{Ric-h-ten}
\Ric_{\al \ov \be}^{D}(z)=  - \frac{\pa^2 \log \det G_D}{\pa z_{\al} \pa \ov z_{\be}}(z),
\end{equation*}
and its Ricci curvature is given by
\begin{equation*}\label{Ric_h-curv}
\text{Ric}_{D}(z,v)=\frac{\sum_{\al, \be=1}^n \Ric_{\al \ov \be}^{D}(z) v^{\al} \ov v^{\be}}{\sum_{\al,\be=1}^n g^{D}_{\al\ov \be}(z)v^\al \ov v^\be}.
\end{equation*}
Kobayashi \cite{Kob59} showed that the Ricci curvature of the Bergman metric on a bounded domain in $\mbb C^n$ is strictly bounded above by $n+1$ and hence the matrix
\[
\ti G_{D}(z)=\begin{pmatrix} \ti g^{D}_{\al \ov \be}(z)\end{pmatrix}_{n \times n} \quad \text{where} \quad \ti g^{D}_{\al \ov \be}(z):=(n+1)g^{D}_{\al\ov \be}(z)-\Ric^{D}_{\al\ov \be}(z),
\]
is positive definite (see also Fuks \cite{Fuks66}). Therefore,
\[
d\ti s^2_{D}=\sum_{\al,\be=1}^n \ti g^{D}_{\al \ov \be}(z)\,dz_{\al}d\ov z_{\be}
\]
is a K\"ahler metric with K\"ahler potential $\log (K_D^{n+1} \det G_{D})$. We call this metric the Kobayashi--Fuks metric on $D$. Similar to the transformation rule of the Bergman metric, if $F: D \to D'$ is a biholomorphism, we have
\begin{equation*}\label{tr-kf}
\ti G_{D}(z)= F'(z)^t\, \ti G_{D'}\big(F(z)\big) \ov F'(z),
\end{equation*}
where $F'(z)$ is the complex Jacobian matrix of $F$ at $z$. This implies that $ds^2_{\ti B,D}$ is an invariant metric.

Among some major developments around the Kobayashi--Fuks metric, Dinew \cite{Dinew11} observed that the Kobayashi--Fuks metric plays an important role in the study of the Bergman representative coordinates, a tool introduced by Bergman in his program of generalizing the Riemann mapping theorem to $\mbb C^n, n>1$. Dinew, in his article \cite{Dinew13}, found out a criterion that detects classes of domains which are complete under the Kobayashi--Fuks metric. In a corollary, he showed that on any bounded hyperconvex domain the Kobayashi--Fuks metric is complete, and hence in particular, by a result of Demailly \cite{Dem87}, this metric is complete on any bounded pseudoconvex domain with Lipschitz boundary.  Recently, in \cite{Borah-Kar}, the boundary behavior of the Kobayashi--Fuks metric has been obtained on certain classes of domains by localizing this metric and some of its related curvatures near holomorphic peak points of pseudoconvex domains. In the above article the existence of closed geodesics with a prescribed homotopy class is also discussed for the Kobayashi--Fuks metric. Since the Kobayashi--Fuks metric is closely related to the Bergman metric, it is natural to explore the classical properties that the Bergman metric enjoy in the setting of the Kobayashi--Fuks metric as well.

The results in this article are motivated by a theorem of Herbort  \cite[Theorem~(3.2)]{Herbort1983} on the existence of geodesic spirals for the Bergman metric on strongly pseudoconvex domains in $\mbb C^n$. We will use the asymptotic formula of Fefferman~\cite{Fefferman} and Boutet de Monvel and Sj\"ostrand~\cite{Monvel-Sjostrand}, devised to study the asymptotic behavior of the Begman kernel, as a prime tool for our investigation concerning the geodesic spirals. As we will see in the subsequent sections of this article, the study of the boundary behavior of objects related to the Kobayashi--Fuks metric will help immensely in establishing the existence of geodesic spirals. Let us fix some notations before we state our results.

Throughout this article, ``smoothly bounded'' domains will mean bounded domains having $C^{\infty}$-smooth boundary. For a domain $D$ in dimension one, we denote by
\begin{align*}
ds^2_{D}(z)=g_D(z)|dz|^2 \quad \text{and} \quad d \ti s^2_{D}(z)=\ti g_D(z)|dz|^2
\end{align*}
the Bergman metric and the Kobayashi--Fuks metric on $D$ respectively. We may use the notations $ds^2_{D}$ and $g_D$ (similarly, $d\ti s^2_{D}$ and $\ti g_D$) interchangeably to specify the Bergman metric (similarly, the Kobayashi--Fuks metric) in dimension one. Recall that the K\"ahler potential of the Kobayashi--Fuks metric in dimension $1$ is $\log A_D(z)$ where
\begin{equation*}\label{pot-A}
A_D=K_D^2g_{D}=K_D\tfrac{\pa^2 K_D}{\pa z \pa \ov z}-\tfrac{\pa K_D}{\pa z}\tfrac{\pa K_D}{\pa \ov z}.
\end{equation*}
Thus
\begin{align*}\label{Fuks_tensor}
\ti g_{ D}(z)=\frac{\pa^2 \log A_D}{\pa z \pa \ov z}(z).
\end{align*}

\begin{defn}
Let $(X,h)$ be a complete Riemannian manifold.
\begin{itemize}
\item[(a)] A geodesic $c:\mbb{R}\rightarrow X$, which is not closed, will be called a \textit{geodesic spiral} if there is a compact subset $K$ of $X$ and $t_0\in \mbb{R}$ such that $c|_{[t_0, \infty)}\subset K$.
\item[(b)] Let $c:\mbb R\rightarrow X$ be a non-trivial geodesic and $x_0$ be a point in $X$. If there exist $t_1,t_2\in \mbb R$ with $t_1<t_2$ and $c(t_1)=c(t_2)=x_0$, we will call the segment $c|_{[t_1,t_2]}$ a \textit{geodesic loop} passing through $x_0$.
\end{itemize}
\end{defn}

Here are the main results that we present in this article:

\begin{thm}\label{local minima}
Let $D$ be a smoothly bounded domain in $\mbb C$ and suppose that $\rho$ is a $C^{\infty}$-smooth strictly subharmonic defining function for $D$. Then there exists a positive number $\ep=\ep(D)$ such that for each geodesic $c:\mbb R\rightarrow D$ for the Kobayashi--Fuks metric satisfying $\rho\big(c(0)\big)>-\ep$ and $\left(\rho\circ c\right)'(0)=0$, we must have $(\rho\circ c)''(0)>0$.
\end{thm}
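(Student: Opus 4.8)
The plan is to convert the statement into the boundary asymptotics of the Kobayashi--Fuks metric by computing $(\rho\circ c)''$ directly along a geodesic. Write $c(t)=z(t)$ and recall that for the conformal K\"ahler metric $\ti g_D|dz|^2$ on a planar domain the geodesic equation reads
\[
\ddot z + \Big(\tfrac{\pa}{\pa z}\log\ti g_D\Big)\dot z^2 = 0 .
\]
Setting $h(t):=\rho\big(c(t)\big)$, I would differentiate twice, use $h'=2\Re(\rho_z\dot z)$, and substitute the geodesic equation to eliminate $\ddot z$; this yields
\[
h'' = 2\Re\!\big(\rho_{zz}\dot z^2\big) + 2\rho_{z\ov z}|\dot z|^2 - 2\Re\!\Big(\rho_z\big(\tfrac{\pa}{\pa z}\log\ti g_D\big)\dot z^2\Big),
\]
where subscripts on $\rho$ denote the corresponding partial derivatives. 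The hypothesis $(\rho\circ c)'(0)=0$ says exactly that $\rho_z\dot z$ is purely imaginary at $t=0$, so that $(\rho_z\dot z)^2=-|\rho_z|^2|\dot z|^2\le 0$ there; this sign is what will make the dominant term favourable.

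The crux is the behaviour of $\tfrac{\pa}{\pa z}\log\ti g_D$ near $\pa D$. Since every smoothly bounded planar domain is strongly pseudoconvex, the Fefferman~\cite{Fefferman} and Boutet de Monvel--Sj\"ostrand~\cite{Monvel-Sjostrand} expansion applies: $K_D=\phi\,(-\rho)^{-2}+\psi\log(-\rho)$ with $\phi,\psi$ smooth up to $\ov D$ and $\phi>0$ on $\pa D$. From this one computes $g_D\sim 2|\rho_z|^2/\rho^2$ and hence $A_D=K_D^2 g_D\sim C(-\rho)^{-6}$, so that $\ti g_D=\pa^2\log A_D/\pa z\,\pa\ov z\sim 6|\rho_z|^2/\rho^2$ near the boundary. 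Taking the logarithmic $z$-derivative annihilates the smooth nonvanishing prefactor and leaves
\[
\tfrac{\pa}{\pa z}\log\ti g_D = -\frac{2\rho_z}{\rho} + E(z),
\]
where $E$ is controlled on a one-sided neighbourhood of $\pa D$ (bounded apart from harmless $\log(-\rho)$ contributions coming from $\psi$, which are $o(1/(-\rho))$). Establishing this expansion with uniform control on $E$---drawing on the boundary analysis of the metric in \cite{Borah-Kar}---is the main obstacle.

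Substituting, the singular part of $-2\Re\big(\rho_z(\tfrac{\pa}{\pa z}\log\ti g_D)\dot z^2\big)$ is
\[
\frac{4}{\rho}\,\Re\!\big((\rho_z\dot z)^2\big)=\frac{4}{\rho}\big(-|\rho_z|^2|\dot z|^2\big)=\frac{4|\rho_z|^2|\dot z|^2}{-\rho},
\]
using both the constraint above and $\rho<0$ inside $D$. Every remaining term---$2\Re(\rho_{zz}\dot z^2)$, $2\rho_{z\ov z}|\dot z|^2$ and $-2\Re(\rho_z E\,\dot z^2)$---is bounded by $C|\dot z|^2$ with $C$ uniform near $\pa D$. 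As $|\rho_z|^2\ge c_0>0$ there (because $\rho$ is a defining function), I obtain
\[
h''(0)\ \ge\ |\dot z(0)|^2\Big(\frac{4c_0}{-\rho\big(c(0)\big)}-C\Big).
\]
Choosing $\ep=\ep(D)>0$ small enough that $4c_0/s>C$ for all $0<s<\ep$ and that the expansion holds on $\{-\ep<\rho<0\}$, the hypothesis $\rho(c(0))>-\ep$ makes the bracket positive; since a non-trivial geodesic has $|\dot z(0)|>0$, this gives $(\rho\circ c)''(0)>0$. Geometrically, the level sets of $\rho$ are strictly geodesically convex from inside for the Kobayashi--Fuks metric near $\pa D$, which is precisely what prevents geodesics from escaping to the boundary.
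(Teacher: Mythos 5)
Your proposal is correct in substance and runs on the same chassis as the paper---the Fefferman/Boutet de Monvel--Sj\"ostrand expansion, the resulting asymptotics $\ti g_D\sim 6|\rho_z|^2/\rho^2$, and the conformal geodesic equation---but your endgame is genuinely different from the paper's, and the comparison is instructive. The paper first proves the boundary estimates you defer as ``the main obstacle'' (its Lemma~\ref{asym h} and Lemma~\ref{asym g tilde}, which give exactly the uniform control on your error term $E$, including the statement $\tfrac{1}{\rho^2}\ti g^{-1}|\rho_z|^2=\tfrac16+O\big(|\rho|\log\tfrac1{|\rho|}\big)$ that pins down the coefficient $-2\rho_z/\rho$ in $\pa_z\log\ti g$), and then argues by contradiction: along a sequence of putative counterexamples $c_k$ with $c_k(0)\to a_0\in\pa D$, all singular terms in its formula of Lemma~\ref{rho compose c} vanish in the limit, the constrained term $\tfrac{2}{\rho}\big(\Re(\rho_z c')\big)^2$ is killed outright by $(\rho\circ c_k)'(0)=0$, and positivity comes from strict subharmonicity at the limit point, $2\rho_{z\ov z}(a_0)|v_0|^2>0$. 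You instead make the singular term the hero: under the constraint, $\Re\big((\rho_z\dot z)^2\big)=-|\rho_z|^2|\dot z|^2$, so the $-2\rho_z/\rho$ part of $\pa_z\log\ti g$ contributes $4|\rho_z|^2|\dot z|^2/(-\rho)$, which dominates everything bounded or logarithmic. Notably, your geodesic equation $\ddot z+(\pa_z\log\ti g)\dot z^2=0$ is the standard one (it checks against the Poincar\'e metric), whereas the paper's complexified Euler--Lagrange step reads $-c''=\tfrac12(\ti g^{-1}\pa_z\ti g)(c')^2$, carrying an extra factor $\tfrac12$; redoing the paper's computation with the standard equation reproduces your formula, with the middle term $\tfrac{4}{\rho}\Re\big((\rho_z c')^2\big)$ (real part of a square, blowing up with favorable sign) in place of the paper's square of a real part (which vanishes under the constraint). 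So your mechanism is the robust one, it yields the stronger quantitative conclusion $(\rho\circ c)''(0)\geq |c'(0)|^2\big(4c_0/(-\rho)-C\log\tfrac{1}{-\rho}-C'\big)$, and---unlike the paper's limit argument---it never actually uses strict subharmonicity of $\rho$, only $\rho_z\neq0$ near $\pa D$. Two cosmetic points: since $E$ genuinely contains $\log(-\rho)$ contributions (from $\mathfrak{h}_{z\ov z}$, cf.\ Lemma~\ref{asym h}), your final bracket should be $4c_0/(-\rho)-C\log\tfrac{1}{-\rho}-C'$ rather than $4c_0/(-\rho)-C$, which is harmless; and, like the paper, you implicitly assume $c$ is non-constant so that $|\dot z(0)|>0$.
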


Note that every smoothly bounded domain in $\mbb C$ has a strictly subharmonic defining function of class $C^{\infty}$. To see this consider any bounded planar domain $D$ with smooth boundary $\pa D$, and fix a $C^{\infty}$ defining function $\rho$ for $D$ which is defined on a neighborhood $U$ of $\ov D$. Without loss of generality assume $U$ is bounded. Therefore both the following quantities
\begin{align*} 
\inf_{z\in \ov U}\bigg|\dfrac{\pa \rho}{\pa z}(z)\bigg| \quad \text{and}\quad \sup_{z\in \ov U}\bigg|\dfrac{\pa^2 \rho}{\pa z \pa \ov z}(z)\bigg|
\end{align*}
are finite and attained. Now choose a positive real number $\la$ satisfying
\begin{align*}
\la>\dfrac{\max_{z\in \ov U}\bigg|\dfrac{\pa^2 \rho}{\pa z \pa \ov z}(z)\bigg|}{\min_{z \in \ov U}\bigg|\dfrac{\pa \rho}{\pa z}(z)\bigg|^2}.
\end{align*}
Then one can check that
\begin{align*}
\ti \rho(z):=\dfrac{\exp\big(\la \rho(z)\big)-1}{\la}
\end{align*}
is a $C^{\infty}$ defining function for $D$ which is strictly subharmonic on $U$.

\begin{thm}\label{main thm} 
Let $D$ be a smoothly bounded domain in $\mbb C$ which is not simply connected. Then for every $z_0 \in D$ which does not lie on a closed geodesic, there exists a geodesic spiral for the Kobayashi--Fuks metric passing through $z_0$.
\end{thm}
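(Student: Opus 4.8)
The plan is to realize the spiral as a limit of shortest geodesic loops based at $z_0$ whose homotopy classes run off to infinity in $\pi_1(D,z_0)$, using Theorem~\ref{local minima} to confine all of these loops to a single compact subset of $D$. The hypothesis that $z_0$ lies on no closed geodesic will be used only at the very end, to guarantee that the limiting geodesic is not closed.

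I would first fix a $C^\infty$ strictly subharmonic defining function $\rho$ for $D$ (these exist, as noted after Theorem~\ref{local minima}) and take $\ep=\ep(D)$ from that theorem. Since the Kobayashi--Fuks metric on a smoothly bounded (hence hyperconvex) planar domain is complete and invariant, it lifts to a complete metric on the universal cover $\pi:\ti D\to D$, on which the deck group $\Ga\cong\pi_1(D,z_0)$ acts freely, properly discontinuously, and by isometries. As $D$ is not simply connected, $\pi_1(D,z_0)$ is a nontrivial free group, so I fix a nontrivial $\al\in\pi_1(D,z_0)$, which has infinite order, with corresponding deck transformation $g\in\Ga$. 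Lifting $z_0$ to $\ti z_0$, Hopf--Rinow furnishes for each $n\ge 1$ a minimizing geodesic in $\ti D$ from $\ti z_0$ to $g^n\ti z_0$; its projection $c_n$ is a shortest geodesic loop based at $z_0$ in the class $\al^n$, of length $\ell_n=d(\ti z_0,g^n\ti z_0)$. Because the $g^n\ti z_0$ are distinct and the action is properly discontinuous, only finitely many orbit points lie in any (compact) metric ball, so $\ell_n\to\infty$.

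The crucial step is to trap every $c_n$ in one fixed compact set. Parametrize $c_n:[0,\ell_n]\to D$ by arc length with $c_n(0)=c_n(\ell_n)=z_0$, so $\rho\circ c_n$ equals $\rho(z_0)$ at both endpoints. If $\rho\circ c_n$ ever exceeded $\max\{\rho(z_0),-\ep\}$, then its maximum over $[0,\ell_n]$ would be attained at some interior point $t^\ast$ with $\rho(c_n(t^\ast))>-\ep$ and $(\rho\circ c_n)'(t^\ast)=0$; but Theorem~\ref{local minima} would then force $(\rho\circ c_n)''(t^\ast)>0$, contradicting that $t^\ast$ is a maximum. Hence every $c_n$ is contained in the fixed compact set $K^\ast=\{z\in D:\rho(z)\le\max\{\rho(z_0),-\ep\}\}$.

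Finally, I would pass to the limit. The initial velocities $v_n=c_n'(0)$ lie on the unit circle of $T_{z_0}D$, so a subsequence satisfies $v_n\to v$; let $c:\mbb R\to D$ be the complete geodesic with $c(0)=z_0$ and $c'(0)=v$. By continuous dependence of geodesics on initial data, $c_n\to c$ uniformly on each interval $[0,T]$, and since $\ell_n\to\infty$ we have $c_n([0,T])\subset K^\ast$ for large $n$, so $c([0,T])\subset K^\ast$; letting $T\to\infty$ gives $c|_{[0,\infty)}\subset K^\ast$. Moreover $c$ cannot be closed, for otherwise $z_0=c(0)$ would lie on a closed geodesic, contrary to hypothesis. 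Thus $c$ is a non-closed geodesic eventually contained in the compact set $K^\ast$, i.e.\ a geodesic spiral through $z_0$. I expect the trapping step to be the main obstacle, and it is precisely there that the interior-minimum principle of Theorem~\ref{local minima}—encoding the boundary behavior of the Kobayashi--Fuks metric—carries the argument; completeness, Hopf--Rinow on the cover, proper discontinuity, and continuous dependence are then standard.
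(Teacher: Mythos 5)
Your proof is correct, and its core step---trapping every geodesic loop through $z_0$ in a fixed compact sublevel set $\{\rho\le\max\{\rho(z_0),-\ep\}\}$ by playing the interior-maximum second derivative test against Theorem~\ref{local minima}---is exactly the paper's argument (the paper uses $K=\{\rho\le-\ep_1/2\}$ with $\ep_1=\min\{\ep(D),-\rho(z_0)\}$, an immaterial difference). Where you diverge is in what you do with this trapping: the paper stops there and invokes, as a black box, a general Riemannian lemma of Herbort (Lemma~\ref{Herbort}: complete manifold, infinite-sheeted universal cover, no closed geodesic through $x_0$, all geodesic loops through $x_0$ in a compact set $\Rightarrow$ spiral through $x_0$), whereas you re-prove that lemma in this setting: lift to the universal cover, use Hopf--Rinow to get minimizing loops in the classes $\al^n$ of an infinite-order element, deduce $\ell_n\to\infty$ from proper discontinuity, and extract a limit geodesic via compactness of the unit sphere at $z_0$ and continuous dependence on initial data, with the no-closed-geodesic hypothesis ruling out a closed limit. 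Your route buys self-containedness---it makes visible precisely where completeness (Dinew's theorem for hyperconvex domains), freeness of $\pi_1(D)$, and the hypothesis on $z_0$ enter---at the cost of length; the paper's route is shorter and cleanly separates the complex-analytic content (Theorem~\ref{local minima}) from the purely Riemannian one. Two small points you should make explicit: Theorem~\ref{local minima} is stated for complete geodesics $c:\mbb R\to D$ with the critical point at $t=0$, so at the interior maximum $t^\ast$ you must extend the segment $c_n$ to all of $\mbb R$ (possible by completeness) and translate time before applying it; and the loops $c_n$ themselves are not closed geodesics (their velocities at $0$ and $\ell_n$ need not match), which is consistent with, and indeed forced by, the hypothesis on $z_0$.
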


Although the set of non-simply connected domains in $\mbb C$ is large, the boundedness and boundary smoothness assumptions in the hypothesis of Theorem~\ref{main thm} somehow restricts the class of domains for which the existence of geodesic spirals is exhibited. However, we observe that these conditions can be relaxed if the domain is finitely connected. Indeed, recall that a domain $D\subset \mbb C$ is called \textit{$m$-connected} if its complement in the extended complex plane has $m+1$ components. If, in addition, none of these components is a singleton, then $D$ is called a \textit{non-degenerate $m$-connected domain}. It is known that such a domain is biholomorphic to a smoothly bounded domain, in fact a domain with real analytic boundary (see for example Theorem 2.1 of \cite{Conway1995}). Thus, an immediate consequence of Theorem~\ref{main thm} is the following:

\begin{cor}\label{finitely connected domains}
Let $D$ be a non-degenerate $m$-connected planar domain and $m\geq 1$. Then for every $z_0 \in D$ which does not lie on a closed geodesic, there exists a geodesic spiral for the Kobayashi--Fuks metric passing through $z_0$.
\end{cor}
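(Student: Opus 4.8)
The plan is to combine the boundary confinement furnished by Theorem~\ref{local minima} with a limiting argument carried out over the fundamental group of $D$. Fix a $C^{\infty}$ strictly subharmonic defining function $\rho$ for $D$ (which exists, as noted after Theorem~\ref{local minima}) and let $\ep=\ep(D)>0$ be the constant of that theorem. Recall that the Kobayashi--Fuks metric is complete on $D$ (as discussed in the introduction), so $(D,d\ti s^2_D)$ is a complete Riemannian surface; Hopf--Rinow then applies and every maximal geodesic is defined on all of $\mbb R$. The first step is to upgrade Theorem~\ref{local minima} to a statement about loops: \emph{every geodesic loop based at $z_0$ lies in the fixed compact set} $K:=\{z\in D:\rho(z)\le\max(\rho(z_0),-\ep)\}$. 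Indeed, if $\ga:[0,\ell]\to D$ is such a loop, then $\rho\circ\ga$ is smooth on $(0,\ell)$ and attains its maximum $M$ at some $t^{*}\in[0,\ell]$; if $M>-\ep$ and $t^{*}\in(0,\ell)$, then $(\rho\circ\ga)'(t^{*})=0$ while $\rho(\ga(t^{*}))>-\ep$, so Theorem~\ref{local minima} forces $(\rho\circ\ga)''(t^{*})>0$, contradicting that $t^{*}$ is an interior maximum. Hence either $M\le-\ep$, or $M$ is attained only at the endpoints (where $\ga=z_0$, so $M=\rho(z_0)$); in both cases $\ga\subset K$.

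Next I would produce geodesic loops of unbounded length. Since $\pa D$ is smooth and compact, $D$ is finitely connected and $\pi_1(D,z_0)$ is a nontrivial free group; fix a nontrivial $g$, which then has infinite order. By completeness, Hopf--Rinow, and the usual direct method (a length-minimizing sequence of loops in a fixed based homotopy class has bounded length, hence lies in a compact metric ball, and a subsequence converges uniformly to a minimizer), each class $g^{n}$ is represented by a length-minimizing geodesic loop $\ga_n$ based at $z_0$, of length $\ell_n$; by the previous paragraph every $\ga_n\subset K$. I claim $\ell_n\to\infty$: otherwise a subsequence of the $\ga_n$ would have bounded length and, lying in the compact set $K$, would subconverge uniformly to a loop, but uniformly nearby loops in a manifold are homotopic rel $z_0$, forcing the corresponding powers of $g$ to coincide for large $n$ — impossible since $g$ has infinite order.

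Finally I would pass to the limit. The unit initial velocities $\dot\ga_n(0)$ lie on the unit circle of $T_{z_0}D$, so a subsequence converges to a unit vector $v_\infty$; let $c$ be the maximal geodesic with $c(0)=z_0$ and $\dot c(0)=v_\infty$. For any fixed $T>0$, choosing $n$ along the subsequence with $\ell_n>T$, the segments $\ga_n|_{[0,T]}$ are the geodesics with initial data $(z_0,\dot\ga_n(0))$ and hence converge uniformly to $c|_{[0,T]}$ by continuous dependence on initial conditions; since each $\ga_n([0,T])\subset K$ and $K$ is closed, $c([0,T])\subset K$. As $T$ is arbitrary, $c([0,\infty))\subset K$. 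Moreover $c$ cannot be closed, since a closed geodesic through $c(0)=z_0$ is excluded by hypothesis, and $c$ is nonconstant as $|\dot c(0)|=1$. Thus $c$ is a non-closed geodesic entirely contained, for $t\ge 0$, in the compact set $K$, i.e. a geodesic spiral through $z_0$.

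I expect the main obstacle to be the confinement step and its survival under the limit. Theorem~\ref{local minima} controls geodesics only pointwise near $\pa D$, and the work lies in converting it into the uniform statement that \emph{all} loops based at $z_0$ — of arbitrarily large length — avoid a neighborhood of the boundary, and that this confinement is inherited by the limiting geodesic on all of $[0,\infty)$. The growth $\ell_n\to\infty$ is exactly what makes the latter possible (it lets one match $c|_{[0,T]}$ with genuine loops for every $T$), and it is here that non-simple-connectedness — the infinite order of $g$ — is indispensable; the hypothesis that $z_0$ avoids closed geodesics is then used solely to guarantee that the limit $c$ is not closed.
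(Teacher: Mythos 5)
Your argument has a genuine gap: it proves Theorem~\ref{main thm}, but not Corollary~\ref{finitely connected domains}, because it silently assumes hypotheses the corollary does not grant. A non-degenerate $m$-connected domain need not be bounded and need not have smooth boundary --- relaxing exactly those two assumptions is the entire point of the corollary (think of the upper half-plane with a closed disk removed, or an annulus-type domain whose boundary components are fractal continua). Yet your very first step fixes a $C^{\infty}$ strictly subharmonic defining function for $D$ and invokes Theorem~\ref{local minima}, both of which are available only for smoothly bounded domains; later you assert that $\pa D$ is ``smooth and compact'' and that completeness holds ``as discussed in the introduction,'' which again rests on boundedness and boundary regularity that $D$ need not possess. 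The paper's proof consists precisely of the reduction you omit: by Theorem 2.1 of \cite{Conway1995}, $D$ is biholomorphic to a smoothly bounded domain $D'$ (even one with real analytic boundary), and since a biholomorphism $F:D\to D'$ satisfies the transformation rule $\ti G_{D}(z)=F'(z)^t\,\ti G_{D'}\big(F(z)\big)\ov{F'(z)}$, it is an isometry for the Kobayashi--Fuks metric. Hence $F$ carries geodesics to geodesics and closed geodesics to closed geodesics, and, being a homeomorphism (so that compact sets correspond to compact sets), it carries geodesic spirals of $D'$ back to geodesic spirals of $D$. One then applies Theorem~\ref{main thm} to $D'$ at the point $F(z_0)$, noting that $F(z_0)$ lies on no closed geodesic of $D'$ by the same invariance, and pulls the resulting spiral back through $F$. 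Without this reduction, none of the analytic machinery (the Fefferman expansion, Lemma~\ref{asym h}, Lemma~\ref{asym g tilde}, Theorem~\ref{local minima}) applies to $D$ at all.

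A secondary remark: even in the smoothly bounded setting, your second and third paragraphs re-derive Lemma~\ref{Herbort} from scratch (minimizing loops in the classes $g^n$, lengths $\ell_n\to\infty$, convergence of initial directions), whereas the paper simply quotes Herbort's result from \cite{Herbort1983}. Your re-derivation is essentially the standard argument and looks sound, as does your confinement step (which mirrors the paper's proof of Theorem~\ref{main thm}, with $K=\{\rho\le \max(\rho(z_0),-\ep)\}$ in place of the paper's $K=\{\rho\leq -\ep_1/2\}$); but for this corollary all of that is redundant, since the sole new content is the biholomorphic transfer described above.
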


\textit{Acknowledgements.} The author would like to thank D. Borah for suggesting the problem, as well as for his constant encouragement and guidance. The author is also grateful to Prof. G. Herbort for his kind inputs. Some of the results presented here, especially in Section~\ref{bdry behav}, has benefited from the interactions the author had with Prof. Herbort over emails. Finally, the author thanks the anonymous referee for valuable suggestions for improving the exposition herein, especially for pointing out the result that is presented in Corollary~\ref{finitely connected domains}.

\section{Boundary behavior}\label{bdry behav}

In this section we will use the following result on asymptotic expansion of the Bergman kernel to derive a few boundary estimates for the Kobayashi--Fuks metric.

\begin{thm}[Fefferman~\cite{Fefferman}, Boutet de Monvel and Sj\"ostrand~\cite{Monvel-Sjostrand}]
Given a smoothly bounded strongly pseudoconvex domain $\Om=\{\rho<0\}$ in $\mbb C^n$, there exist functions $\Phi,\Psi\in C^{\infty}(\ov \Om)$ with $\Phi>0$ near $\pa \Om$ such that the Bergman kernel function (on the diagonal) $K_{\Om}$ of $\Om$ is expressed by the formula
\begin{equation*}
K_{\Om}(z)=\dfrac{h_1(z)}{\big(-\rho(z)\big)^{n+1}}
\end{equation*}
with $h_1(z)=\Phi(z)+\big(-\rho(z)\big)^{n+1}\Psi(z)\log\big(-\rho(z)\big)$.
\end{thm}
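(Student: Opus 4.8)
The plan is to reduce the global statement to a purely local analysis near a fixed boundary point and then import the asymptotics of an explicit model. The boundary singularity of $K_{\Om}$ is a local phenomenon: away from $\pa\Om$ the kernel is real-analytic, and a standard localization argument — using H\"ormander's or Kohn's $L^2$-estimates for $\ov\pa$ on strongly pseudoconvex domains — shows that if two such domains coincide in a neighborhood of a common boundary point, their Bergman kernels differ there by a function that is smooth up to the boundary. Thus it suffices to establish the formula for a convenient model. First I would fix $p\in\pa\Om$ and, using strong pseudoconvexity, perform a local biholomorphic change of coordinates so that $\pa\Om$ osculates to high order the boundary of the unit ball $\mbb B^n$ (equivalently the Siegel upper half-space), making the Levi form positive definite in the new frame.

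Next I would feed in the model asymptotics. On $\mbb B^n$ one has the closed form $K_{\mbb B^n}(z)=(n!/\pi^n)(1-|z|^2)^{-(n+1)}$, which already displays the leading singularity $\Phi/(-\rho)^{n+1}$ with $\Phi>0$ near the boundary. To capture the subleading structure I would construct an approximate solution $u$ of the complex Monge--Amp\`ere (Fefferman) equation $J[u]=1$ with $u=0$ on $\pa\Om$, where $J[u]$ is the determinant of the bordered complex Hessian of $u$. The formal solution has an expansion of the shape $u=\rho\bigl(\eta_0+\eta_1\rho+\cdots+\eta_n\rho^n\bigr)+(\text{smooth})\,\rho^{\,n+1}\log(-\rho)+\cdots$, and it is precisely the obstruction to smoothness arising at order $n+1$ that forces the logarithmic term. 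Using $u$ as a refined potential in the kernel ansatz then produces the term $(-\rho)^{n+1}\Psi\log(-\rho)$ and fixes the structure of $h_1$.

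I would then control the remainder. Writing $K_{\Om}=(\text{model part})+R$, the task is to show $R$ is smooth up to $\pa\Om$ and of the claimed lower order. This is where the $\ov\pa$-machinery re-enters: one represents $K_{\Om}$ via the Bergman projection applied to a suitable bump function, solves a $\ov\pa$-problem for the discrepancy between the true and the approximate kernels with Kohn-type subelliptic estimates, and bootstraps regularity all the way to the boundary. I expect this remainder estimate to be the technical heart of the argument and the main obstacle: proving that the error is genuinely $C^\infty$ up to $\pa\Om$ (not merely bounded) and isolating the logarithmic term cleanly requires the sharp boundary regularity theory for $\ov\pa$ on strongly pseudoconvex domains.

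Finally I would record the alternative microlocal route of Boutet de Monvel and Sj\"ostrand, which bypasses the model reduction: one realizes the Bergman (or Szeg\H{o}) projection as a Fourier integral operator with a complex-valued phase $\psi$ satisfying $\Im\psi\geq 0$, whose Schwartz kernel equals $\int_0^\infty e^{\,i t\,\psi(z,w)}a(z,w,t)\,dt$ modulo a smoothing operator, and then evaluates the fiber integral on the diagonal by stationary phase. The same expansion drops out, with $\Phi$ and $\Psi$ read off from the leading symbols; here the delicate input is the construction of $\psi$ via an almost-analytic extension determined by the Levi form.
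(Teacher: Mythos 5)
You should note at the outset that the paper itself contains no proof of this statement: it is quoted as a known theorem of Fefferman \cite{Fefferman} and Boutet de Monvel--Sj\"ostrand \cite{Monvel-Sjostrand} and used as a black box throughout Section~2. So there is no internal argument to compare yours against, and the honest assessment is of your sketch on its own terms. Judged that way, what you have written is a competent roadmap of the two published proofs rather than a proof: every load-bearing step is announced ("I would construct\dots", "one bootstraps\dots") and then deferred, and in this theorem the deferred steps are not routine --- they are the entire content.

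Concretely: (1) In your localization route, the claim that the remainder $R$ is smooth up to $\pa \Om$ \emph{is} the theorem; osculation by balls yields only the leading singularity $(-\rho)^{-(n+1)}$, and extracting the full expansion with coefficients in $C^{\infty}(\ov \Om)$ and the precise logarithmic term was exactly Fefferman's hard integral-asymptotics work --- naming Kohn's subelliptic estimates supplies no mechanism that isolates $\log(-\rho)$ at order $n+1$ rather than some weaker error. (2) Your middle paragraph conflates two different papers: the 1974 proof of the expansion does \emph{not} go through the Monge--Amp\`ere equation $J[u]=1$; the formal solution of that equation and its order-$(n+1)$ log obstruction is Fefferman's later (1976) work, and the phrase "using $u$ as a refined potential in the kernel ansatz" hides the real difficulty, since there is no a priori identity expressing $K_{\Om}$ in terms of an approximate Monge--Amp\`ere solution --- establishing that link is itself a deep result, so as written this step is circular. (3) You never address the assertion $\Phi>0$ near $\pa \Om$, which is precisely the part of the statement the paper later relies on (it is what makes $h_2$ bounded away from zero in Lemma~2.2); it requires computing the boundary value of the leading coefficient ($n!/\pi^n$ times the Monge--Amp\`ere determinant of $\rho$, positive by strict pseudoconvexity), and no step of your outline produces it. (4) In the microlocal route, "evaluate the fiber integral on the diagonal by stationary phase" glosses over the degeneracy of the complex phase on the boundary diagonal (where $\Im\psi$ vanishes), and over the fact that Boutet de Monvel--Sj\"ostrand work primarily with the Szeg\H{o} projector, so passing to the Bergman kernel needs an additional argument. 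None of these gaps indicates a wrong idea --- your outline tracks the literature correctly apart from the 1974/1976 conflation --- but as a proof attempt it establishes nothing beyond what the citations already assert, which is also all the paper itself claims.
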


In the above theorem, and henceforth, $C^{\infty}(\ov \Om)$ denotes the set of $C^{\infty}$-smooth functions on $\Om$ which extends smoothly to a larger open neighborhood containing $\ov \Om$. From this point onwards, $D$ will denote a smoothly bounded domain in $\mbb C$, and $\rho$ will be a strictly subharmonic defining function for $D$. That is,
\begin{align*}
D=\{z\in U: \rho(z)<0\},
\end{align*}
where $U$ is a neighborhood of $\ov D$ and $\rho:U\rightarrow \mbb R$ is a strictly subharmonic function on $U$ of class $C^{\infty}$ with $\pa \rho/\pa z \neq 0$ on the boundary $\pa D=\{\rho=0\}$. We will simply write $g(z)$ for $g_D(z)$ and $\ti g(z)$ for $\ti g_D(z)$ since the domain under consideration is clear. Note that in dimension 1, we have $$h_1(z)=\Phi(z)+\big(-\rho(z)\big)^{2}\Psi(z)\log\big(-\rho(z)\big),$$ and the Bergman kernel $K_D$ is given by
\begin{equation}\label{ker n=1}
K_D(z)=\dfrac{h_1(z)}{\big(-\rho(z)\big)^{2}}.
\end{equation}
Computing the partial derivatives of $K_D$ using formula~(\ref{ker n=1}), one obtains
\begin{align*}
\dfrac{\pa K_D}{\pa z}&=(-\rho)^{-2}\dfrac{\pa h_1}{\pa z}-2 (-\rho)^{-3}h_1 \dfrac{\pa \rho}{\pa z},\quad \text{and}\\
\dfrac{\pa^2 K_D}{\pa z \pa \ov z}&= (-\rho)^{-2}\dfrac{\pa^2 h_1}{\pa z \pa \ov z}-4 (-\rho)^{-3}\Re \bigg(\dfrac{\pa h_1}{\pa z}\dfrac{\pa \rho}{\pa \ov z}\bigg)-2(-\rho)^{-3}h_1\dfrac{\pa^2 \rho}{\pa z \pa \ov z}+6(-\rho)^{-4}h_1\left|\dfrac{\pa \rho}{\pa z}\right|^2.
\end{align*}
Using the above expressions, we compute
\begin{align*}
A_D(z) &=\bigg(K_D\dfrac{\pa^2 K_D}{\pa z \pa \ov z}-\dfrac{\pa K_D}{\pa z}\dfrac{\pa K_D}{\pa\ov z}\bigg)\bigg|_z\\
&=\big(-\rho(z)\big)^{-4}h_1(z)\dfrac{\pa^2 h_1}{\pa z \pa \ov z}(z)-\big(-\rho(z)\big)^{-4}\left|\dfrac{\pa h_1}{\pa z}(z)\right|^2-2\big(-\rho(z)\big)^{-5}h_1^2(z) \dfrac{\pa^2 \rho}{\pa z \pa \ov z}(z)\\
&\quad+2 \big(-\rho(z)\big)^{-6}h_1^2(z) \left|\dfrac{\pa \rho}{\pa z}(z)\right|^2\\
&=:\dfrac{h_2(z)}{\big(-\rho(z)\big)^{6}},
\end{align*}
where 
\begin{align}\label{iden 11}
h_2=(-\rho)^{2}h_1\dfrac{\pa^2 h_1}{\pa z \pa \ov z}-(-\rho)^{2}\left|\dfrac{\pa h_1}{\pa z}\right|^2-2(-\rho)h_1^2 \dfrac{\pa^2 \rho}{\pa z \pa \ov z}+2h_1^2 \left|\dfrac{\pa \rho}{\pa z}\right|^2.
\end{align}
Observe that in our case $h_2(z)>0$ on $D$ as $h_2(z)=\big(-\rho(z)\big)^{6}A_D(z)$, and $$A_D(z)=K_D^2(z)\dfrac{\pa^2 \log K_D}{\pa z\pa \ov z}>0$$ owing to the fact that the Bergman metric is positive definite on bounded domains. Therefore $\log h_2(z)$ is a well defined smooth function on $D$.
 
Let us fix some further notations. We set
\begin{align*}
\mathfrak{h}(z):= \log h_2(z), \quad z\in D.
\end{align*}
For a smooth function $f$, henceforth we will write
\begin{align*}
f_z:=\dfrac{\pa f}{\pa z},\quad f_{\ov z}:=\dfrac{\pa f}{\pa\ov z},\quad f_{z^2}:=\dfrac{\pa^2 f}{\pa z^2},\quad f_{z\ov z}:=\dfrac{\pa^2 f}{\pa z\pa \ov z},\quad f_{z^2\ov z}:=\dfrac{\pa^3 f}{\pa z^2\pa \ov z}, \quad \text{and so on.}
\end{align*}

\begin{lem}\label{asym h}
There exist universal positive constants $S_1,\ldots,S_5$ depending only on $D$ such that
\begin{align*}
\big|\mathfrak{h}_z(z)\big| &\leq S_1,\\
\big|\mathfrak{h}_{z\ov z}(z)\big| &\leq S_2 \big|\log|\rho(z)|\big|+S_3,\\
\big|\mathfrak{h}_{z^2\ov z}(z)\big| &\leq S_4 \big|\rho(z)\big|^{-1}+S_5,
\end{align*}
for $z\in D$.
\end{lem}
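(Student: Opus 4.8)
The plan is to prove the three estimates first on a fixed one-sided collar $V=\{z\in D:\rho(z)>-\de\}$ of $\pa D$, where the Fefferman--Boutet de Monvel--Sj\"ostrand expansion gives $K_D=h_1/(-\rho)^2$ with $h_1=\Phi+(-\rho)^2\Psi\log(-\rho)$, and then to extend them to all of $D$ by a compactness argument. On the complementary set $\{z\in D:\rho(z)\le -\de\}$, which is compact, the function $\mathfrak h=\log h_2$ is smooth (as $h_2>0$ on $D$), so it and all its derivatives are continuous, hence bounded, while $\big|\log|\rho|\big|$ and $|\rho|^{-1}$ are bounded below there by positive constants; enlarging $S_1,\dots,S_5$ then upgrades the collar estimates to global ones. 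On $V$ I would first record from (\ref{iden 11}) that $h_1\to\Phi>0$ and $h_2\to 2\Phi^2|\rho_z|^2>0$ as $\rho\to 0^-$ (using $\rho_z\neq 0$ on $\pa D$), so after shrinking $\de$ I may assume $h_2\ge c_0>0$ on $V$. Writing $\mathfrak h=\log h_2$, the logarithmic-derivative identities
\begin{align*}
\mathfrak h_z &= \frac{(h_2)_z}{h_2},\qquad
\mathfrak h_{z\ov z}=\frac{(h_2)_{z\ov z}}{h_2}-\frac{(h_2)_z(h_2)_{\ov z}}{h_2^2},\\
\mathfrak h_{z^2\ov z} &= \frac{(h_2)_{z^2\ov z}}{h_2}
-\frac{(h_2)_{z\ov z}(h_2)_z+(h_2)_{z^2}(h_2)_{\ov z}+(h_2)_z(h_2)_{z\ov z}}{h_2^2}
+\frac{2\big((h_2)_z\big)^2(h_2)_{\ov z}}{h_2^3}
\end{align*}
then reduce everything to growth bounds for the derivatives of $h_2$ together with the lower bound $h_2\ge c_0$.

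The engine of the argument is a precise accounting of the singularities produced by the logarithmic term in $h_1$. The key point I would isolate as a sublemma is that for $(p,q)\neq(0,0)$ the derivative $\pa_z^p\pa_{\ov z}^q\log(-\rho)$ equals a function of class $C^\infty(\ov D)$ divided by $\rho^{p+q}$, and is therefore $O\big(|\rho|^{-(p+q)}\big)$ with \emph{no} logarithmic factor; the factor $\log(-\rho)$ survives only in the undifferentiated term. Combining this with $\pa^m\big((-\rho)^2\big)=O\big(|\rho|^{\max(2-m,0)}\big)$ through the Leibniz rule, I would show that an order-$k$ derivative $(h_1)_{z^a\ov z^b}$ (with $a+b=k$) is $O(1)$ for $k\le 1$, is $O\big(\big|\log|\rho|\big|\big)$ for $k=2$, and is $O\big(|\rho|^{2-k}\big)$ for $k\ge 3$, where crucially the power $|\rho|^{2-k}$ carries no logarithm when $k\ge 3$.

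With these rates I would substitute $h_1$ into (\ref{iden 11}) and differentiate the four summands term by term, retaining only the most singular contribution of each. The expected outcome is $(h_2)_z=O(1)$, then $(h_2)_{z^2}=(h_2)_{z\ov z}=O\big(\big|\log|\rho|\big|\big)$, and finally $(h_2)_{z^2\ov z}=O\big(|\rho|^{-1}\big)$; for the last, the decisive contribution is $(-\rho)^2 h_1\,(h_1)_{z^3\ov z^2}$, where the order-$5$ derivative is $O\big(|\rho|^{-3}\big)$ and the prefactor $(-\rho)^2$ restores it to $O\big(|\rho|^{-1}\big)$. One must also check that the remaining summands of $h_2$ and all cross terms are no more singular than this, which the rates above make routine. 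Feeding these bounds into the three displayed identities and using $h_2\ge c_0$ yields $|\mathfrak h_z|=O(1)$, $|\mathfrak h_{z\ov z}|=O\big(\big|\log|\rho|\big|\big)$, and $|\mathfrak h_{z^2\ov z}|=O\big(|\rho|^{-1}\big)$ on $V$, which are exactly the asserted forms.

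The main obstacle is the third estimate: a naive count of logarithmic factors would produce the weaker bound $O\big(|\rho|^{-1}\big|\log|\rho|\big|\big)$. Removing the stray logarithm is precisely where the sublemma on $\pa^k\log(-\rho)$ is essential --- every derivative that actually lands on $\log(-\rho)$ trades the logarithm for one additional power of $1/\rho$, so the genuinely most singular terms are log-free --- and getting the exponent bookkeeping exactly right, so that the $(-\rho)^2$ prefactor in the leading part of $h_2$ absorbs two of the three lost powers, is the delicate part of the computation. Once the sublemma is in place, the rest is careful but mechanical term-tracking.
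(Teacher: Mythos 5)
Your proposal is correct and takes essentially the same route as the paper: both arguments rest on the observations that $h_2\to 2\Phi^2|\rho_z|^2>0$ at $\pa D$ (so bounds on $\mathfrak h=\log h_2$ reduce to bounds on derivatives of $h_2$) and that each derivative landing on $\log(-\rho)$ trades the logarithm for a power of $1/\rho$, so the most singular contributions are log-free. The paper packages this same bookkeeping via its ``smooth part plus problematic part'' decomposition and the $\prec$ notation, whereas you make it explicit through the sublemma on $\pa_z^p\pa_{\ov z}^q\log(-\rho)$ and the quotient-rule identities, but the substance of the two proofs is identical.
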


\begin{proof}
This result will follow from a bare-hand computation and observing carefully to what extent the logarithm terms in $h_2(z)$ influence the behavior of all its partial derivatives near the boundary. First note that, for any $z_0\in \pa D$,
\begin{align*}
\lim_{z\in D\to z_0} h_2(z)=2\Phi^2(z_0)\bigg|\dfrac{\pa \rho}{\pa z}(z_0)\bigg|^2>0.
\end{align*}
So the function $h_2(z)$ stays away from zero even near the boundary $\pa D$. Therefore both $h_2(z)$ and $\log h_2(z)$ will exhibit similar asymptotic behavior on $D$ along with all their respective partial derivatives.

Denoting $f$ either of the functions $h_1$ or $h_2$, observe that we can write
\begin{align*}
\dfrac{\pa^{m+n}f}{\pa z^m \pa \ov z^n}=\big(\text{smooth part on}\,\ov D\big)+\big(\text{remaining problematic part on}\,\ov D\big)
\end{align*}
for $m,n\in \mbb N\cup \{0\}$. Let us explain the terms smooth part and problematic part on $\ov D$ clearly. The smooth part on $\ov D$ will be the maximal sum of product of several functions, in the complete expansion of $\tfrac{\pa^{m+n}f}{\pa z^m \pa \ov z^n}(z)$ by the product rule (without writing addition of any two functions inside a bracket), such that in each summand, all the individual functions involved in the product are of class $C^{\infty}(\ov D)$. We used the term ``maximal'' in the previous line to ensure that each summand present in the remaining problematic part on $\ov D$ (which is again a product of several functions) will contain at least one function which is not of class $C^{\infty}(\ov D)$. For example, we have
\begin{align*}
h_1(z)&=\Phi(z)+\rho^2(z)\Psi(z)\log\big(-\rho(z)\big),\\
\dfrac{\pa h_1}{\pa z}(z)&=\dfrac{\pa \Phi}{\pa z}(z)+\rho(z)\Psi(z)\dfrac{\pa \rho}{\pa z}(z)+\rho^2(z)\dfrac{\pa \Psi}{\pa z}(z)\log\big(-\rho(z)\big)+2\rho(z)\Psi(z)\dfrac{\pa \rho}{\pa z}(z)\log\big(-\rho(z)\big).
\end{align*}
Therefore the smooth parts on $\ov D$ for the functions $h_1(z)$, $\tfrac{\pa h_1}{\pa z}(z)$ are 
\[
\Phi(z) \quad\text{and} \quad \dfrac{\pa \Phi}{\pa z}(z)+\rho(z)\Psi(z)\dfrac{\pa \rho}{\pa z}(z)
\]
respectively. Similarly, the problematic parts on $\ov D$ for the functions $h_1(z)$, $\tfrac{\pa h_1}{\pa z}(z)$ are 
\[
\rho^2(z)\Psi(z)\log\big(-\rho(z)\big) \quad\text{and} \quad \rho^2(z)\dfrac{\pa \Psi}{\pa z}(z)\log\big(-\rho(z)\big)+2\rho(z)\Psi(z)\dfrac{\pa \rho}{\pa z}(z)\log\big(-\rho(z)\big)
\]
respectively. We will use the following notation,
\begin{align*}
\dfrac{\pa^{m+n}f}{\pa z^m \pa \ov z^n}(z)\prec \al(z) \quad \text{if}\quad \bigg|\text{problematic part in}\,\dfrac{\pa^{m+n}f}{\pa z^m \pa \ov z^n}(z)\bigg|\leq S \big|\al(z)\big|
\end{align*}
near $\pa D$ for some uniform constant $S>0$ . Note that, in the previous example, we have
\begin{align*}
h_1(z)&\prec \rho^2(z)\log\big(-\rho(z)\big),\\
\dfrac{\pa h_1}{\pa z}(z) &\prec \rho(z)\log\big(-\rho(z)\big).
\end{align*}
Furthermore, after computing $\pa^2 h_1/\pa z\pa \ov z$ explicitly using the product rule, one can check that
$$\dfrac{\pa^2 h_1}{\pa z\pa \ov z}(z)\prec \log\big(-\rho(z)\big).$$
Plugging in the expressions of $h_1$, $\pa h_1/ \pa z$, $\pa^2 h_1/\pa z\pa \ov z$ in (\ref{iden 11}), we can check that each summand in the problematic part on $\ov D$ for the function $h_2(z)$ is a function of the form 
\begin{align}\label{iden 12}
a(z)\rho^k(z)\log\big(-\rho(z)\big),\quad \text{with}\,\,\,k\geq 2,
\end{align}
where $a(z)$ is an element of $C^{\infty}(\ov D)$. Therefore,
\begin{align*}
h_2(z)\prec \rho^2(z)\log\big(-\rho(z)\big).
\end{align*}
Expression (\ref{iden 12}), in particular, implies that the problematic part on $\ov D$ for the function $\pa h_2/\pa z$ is a finite sum of the functions of the form
\begin{align*}
b(z)\rho^k(z)\log\big(-\rho(z)\big),\quad \text{with}\,\,\,k\geq 1,
\end{align*}
where $b(z)$ is an element of $C^{\infty}(\ov D)$. This clearly implies
\begin{align}\label{iden 13}
\dfrac{\pa h_2}{\pa z}(z) &\prec \rho(z)\log\big(-\rho(z)\big).
\end{align}
Proceeding in a similar manner, computing the successive partial derivatives of $h_2(z)$ and observing their problematic parts on $\ov D$, we conclude
\begin{align}\label{partial h_2}
\dfrac{\pa^2 h_2}{\pa z\pa \ov z}(z)&\prec \log\big(-\rho(z)\big),\\ \nonumber
\dfrac{\pa^3 h_2}{\pa z^2\pa \ov z}(z)&\prec \big|\rho(z)\big|^{-1}.
\end{align}
The lemma now follows immediately from (\ref{iden 13}) and (\ref{partial h_2}).
\end{proof}

\begin{lem}\label{asym g tilde}
There exist an open neighborhood $V\supset \pa D$ and a universal positive constant $C=C(D)$ such that on $V\cap D$ the following bounds hold:
\begin{itemize}
\item[(i)] $\left|\ti g^{-1} \rho_z\right|\leq C \rho^2$.
\item[(ii)] $\left|\dfrac{1}{\rho^2} \ti g^{-1}\rho_z- \dfrac{1}{6Q}\rho_{z\ov z}^{-1}\rho_z\right|\leq C |\rho|\log \dfrac{1}{|\rho|}$,\\
with $Q(z)=\big(\rho_{z\ov z}(z)\big)^{-1}\big|\rho_z(z)\big|^2$.
\item[(iii)] $\left|\dfrac{1}{\rho^2} \ti g^{-1}|\rho_z|^2- \dfrac{1}{6}\right|\leq C |\rho|\log \dfrac{1}{|\rho|}$.
\end{itemize}
\end{lem}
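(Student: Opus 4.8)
The plan is to differentiate the Kähler potential directly and isolate the dominant singular term. Since $A_D = h_2/(-\rho)^6$, its logarithm splits as $\log A_D = \mathfrak{h} - 6\log(-\rho)$, so
\[
\ti g = \mathfrak{h}_{z\ov z} - 6\,\pa_z\pa_{\ov z}\log(-\rho) = \mathfrak{h}_{z\ov z} - \frac{6\rho_{z\ov z}}{\rho} + \frac{6|\rho_z|^2}{\rho^2}.
\]
Multiplying through by $\rho^2$ exposes the leading term,
\[
\rho^2\ti g = 6|\rho_z|^2 + E, \qquad E := \rho^2\mathfrak{h}_{z\ov z} - 6\rho\,\rho_{z\ov z}.
\]
This identity is the backbone of all three estimates: the entire content of the lemma is that $6|\rho_z|^2$ is the dominant part of $\rho^2\ti g$ near $\pa D$, and that the remainder $E$ is genuinely of lower order.

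Next I would control $E$ and establish two-sided bounds on the denominator. By the middle estimate of Lemma~\ref{asym h}, $|\rho^2\mathfrak{h}_{z\ov z}| \leq \rho^2\big(S_2|\log|\rho|| + S_3\big)$, while $|6\rho\,\rho_{z\ov z}| \leq C|\rho|$ because $\rho_{z\ov z}$ is smooth up to $\pa D$; together these give $|E| \leq C|\rho|\log(1/|\rho|)$ near the boundary (the $O(\rho^2\log)$ contribution being absorbed into the larger $O(|\rho|)$ term). Since $\rho_z \neq 0$ on the compact set $\pa D$, there is $c_0>0$ and a neighborhood $V\supset \pa D$ on which $|\rho_z|^2 \geq c_0$; shrinking $V$ so that $|E|$ is small there, we obtain $0 < c \leq \rho^2\ti g = 6|\rho_z|^2 + E \leq C$ on $V\cap D$. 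This is the step demanding the most care, and it is precisely where Lemma~\ref{asym h} does the real work: without the logarithmic control on $\mathfrak{h}_{z\ov z}$ one could not certify that $E$ is lower order, and the whole asymptotic scheme would collapse.

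With these bounds in hand, the three estimates are elementary algebra, all driven by the single factor $E$ appearing in a numerator. Writing $\ti g^{-1} = \rho^2/(6|\rho_z|^2 + E)$, part (i) is immediate from the lower bound on the denominator and the boundedness of $|\rho_z|$. For (iii) I would compute
\[
\frac{1}{\rho^2}\ti g^{-1}|\rho_z|^2 - \frac16 = \frac{|\rho_z|^2}{6|\rho_z|^2 + E} - \frac16 = \frac{-E}{6\big(6|\rho_z|^2 + E\big)},
\]
whose modulus is $\leq C|E| \leq C|\rho|\log(1/|\rho|)$. For (ii), substituting $Q = \rho_{z\ov z}^{-1}|\rho_z|^2$ gives $\tfrac{1}{6Q}\rho_{z\ov z}^{-1}\rho_z = \rho_z/(6|\rho_z|^2)$, so
\[
\frac{1}{\rho^2}\ti g^{-1}\rho_z - \frac{1}{6Q}\rho_{z\ov z}^{-1}\rho_z = \frac{\rho_z}{6|\rho_z|^2 + E} - \frac{\rho_z}{6|\rho_z|^2} = \frac{-\rho_z\,E}{6|\rho_z|^2\big(6|\rho_z|^2 + E\big)},
\]
again bounded by $C|E| \leq C|\rho|\log(1/|\rho|)$. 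Thus all three bounds reduce to the common error estimate on $E$, and the only genuine obstacle is the asymptotic bookkeeping already discharged in Lemma~\ref{asym h}.
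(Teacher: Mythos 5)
Your proof is correct, and all three estimates do follow from your single identity $\rho^2\ti g = 6|\rho_z|^2 + E$ together with the bound $|E|\leq C|\rho|$ near $\pa D$ (which is even slightly better than the stated $|\rho|\log\frac{1}{|\rho|}$, so the absorption is legitimate). The route is recognizably the same in substance as the paper's --- both start from $\log A_D=\mathfrak{h}-6\log(-\rho)$, identify $6|\rho_z|^2/\rho^2$ as the leading term of $\ti g$, and lean on Lemma~\ref{asym h} for the logarithmic control of $\mathfrak{h}_{z\ov z}$ --- but your bookkeeping is genuinely different and leaner. The paper keeps the intermediate metric $\hat g=\big(-\log(-\rho)\big)_{z\ov z}$, writes $\ti g = 6\big(1+\tfrac16\mathfrak{h}_{z\ov z}\hat g^{-1}\big)\hat g$, inverts via the identity $\ti g^{-1}=\tfrac16\hat g^{-1}-\tfrac{1}{36}\hat g^{-1}\big(1+\tfrac16\mathfrak{h}_{z\ov z}\hat g^{-1}\big)^{-1}\mathfrak{h}_{z\ov z}\hat g^{-1}$, and then computes $\hat g^{-1}\rho_z=\tfrac{\rho^2}{|\rho|+Q}\rho_{z\ov z}^{-1}\rho_z$ exactly, extracting its nonvanishing denominator from the compactness bound $|\rho|+Q\geq c_0$ on $\ov D$; your version clears denominators once, so that $\ti g^{-1}=\rho^2/(6|\rho_z|^2+E)$ and parts (i)--(iii) become one-line fraction manipulations with a common error $E$, with nondegeneracy coming instead from $|\rho_z|^2\geq c_0$ on a collar of $\pa D$ (equally valid, since the lemma only asserts bounds on $V\cap D$). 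What each approach buys: yours is shorter and makes transparent that a single error term drives all three bounds; the paper's perturbative inversion of $\ti g$ around $\hat g$ is the form that survives when $\ti g$ is matrix-valued, which matters for the higher-dimensional extension contemplated in the concluding remarks, where one cannot simply write the inverse metric as a scalar reciprocal.
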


\begin{proof}
Let us define 
\begin{align}\label{g hat}
\hat g(z):=\big(-\log(-\rho)\big)_{z\ov z}(z)=\dfrac{\rho_{z\ov z}(z)}{-\rho(z)}+\dfrac{\big|\rho_z(z)\big|^2}{\rho^2(z)}.
\end{align}
Since $h_2(z)=\big(-\rho(z)\big)^6 A_D(z)$, we have 
\begin{align}\label{iden 6}
\ti g(z)=6 \hat g(z)+\mathfrak{h}_{z\ov z}(z)
=6\bigg(1+\dfrac{1}{6}\mathfrak{h}_{z\ov z}\hat g^{-1}\bigg)\hat g(z).
\end{align}
Therefore, 
\begin{align}\label{iden 1}
\ti g^{-1}=\dfrac{1}{6}\hat g^{-1}\bigg(1+\dfrac{1}{6}\mathfrak{h}_{z\ov z}\hat g^{-1}\bigg)^{-1}.
\end{align}
Moreover
\begin{align*}
\ti g^{-1}-\dfrac{1}{6}\hat g^{-1} &=\dfrac{1}{6}\hat g^{-1}\bigg[\bigg(1+\dfrac{1}{6}\mathfrak{h}_{z\ov z}\hat g^{-1}\bigg)^{-1}-1\bigg]\\
&=\dfrac{1}{6}\hat g^{-1}\bigg(1+\dfrac{1}{6}\mathfrak{h}_{z\ov z}\hat g^{-1}\bigg)^{-1}\bigg[1-\bigg(1+\dfrac{1}{6}\mathfrak{h}_{z\ov z}\hat g^{-1}\bigg)\bigg]\\
&=-\dfrac{1}{36}\hat g^{-1}\bigg(1+\dfrac{1}{6}\mathfrak{h}_{z\ov z}\hat g^{-1}\bigg)^{-1}\mathfrak{h}_{z\ov z}\hat g^{-1},
\end{align*}
which is same as
\begin{align}\label{iden 4}
\ti g^{-1}=\dfrac{1}{6}\hat g^{-1}-\dfrac{1}{36}\hat g^{-1}\bigg(1+\dfrac{1}{6}\mathfrak{h}_{z\ov z}\hat g^{-1}\bigg)^{-1}\mathfrak{h}_{z\ov z}\hat g^{-1}.
\end{align}
Hence, before studying $\ti g^{-1} \rho_z$, let us first focus on $\hat g^{-1} \rho_z$.

For $z\in D$, using (\ref{g hat}) one can compute
\begin{align}\label{g hat inv}
\hat g^{-1}(z)=\dfrac{\big|\rho(z)\big|^2}{\big|\rho(z)\big|\rho_{z\ov z}(z)+\big|\rho_z(z)\big|^2}&=\big|\rho(z)\big|\bigg(1-\dfrac{\rho^{-1}_{z\ov z}(z)|\rho_z(z)|^2}{|\rho(z)|+\rho^{-1}_{z\ov z}(z)|\rho_z(z)|^2}\bigg)\rho_{z\ov z}^{-1}(z)\\ \nonumber
&=\big|\rho(z)\big|\bigg(1-\dfrac{Q(z)}{|\rho(z)|+Q(z)}\bigg)\rho_{z\ov z}^{-1}(z).
\end{align}
This gives us 
\begin{align}\label{iden 3}
\hat g^{-1} \rho_z=\dfrac{\rho^2}{|\rho|+Q}\rho_{z\ov z}^{-1}\rho_z.
\end{align}
Note that, by the choice of our defining function $\rho$, there exists a constant $c_0=c_0(D)>0$ such that $|\rho|+Q\geq c_0$ on $\ov{D}$. Therefore
\begin{align}\label{iden 2}
\big|\hat g^{-1} \rho_z\big|\leq \dfrac{C_1}{c_0}\rho^2, \quad \text{where}\quad C_1=\sup_{\ov D}\big|\rho_{z\ov z}^{-1}\rho_z\big|.
\end{align}
Now, observe that $\hat g^{-1}$ tends to zero near $\pa D$ not slower than $|\rho|$. This can be seen by (\ref{g hat inv}), as
\begin{align*}
\hat g^{-1}=|\rho|\bigg(1-\dfrac{Q}{|\rho|+Q}\bigg)\rho_{z\ov z}^{-1}\leq |\rho|\rho_{z\ov z}^{-1}\leq \dfrac{1}{c_1} |\rho|,
\end{align*}
with the constant $c_1$ satisfying $\rho_{z\ov z}(z)\geq c_1>0$ on $\ov D$. Again, by Lemma~\ref{asym h}, $\mathfrak{h}_{z\ov z}$ is bounded by $C_2\big|\log|\rho|\big|$ with some unimportant constant $C_2>0$. Hence $\mathfrak{h}_{z\ov z}\hat g^{-1}$ goes to zero not slower than $C_3|\rho|\log \dfrac{1}{|\rho|}$ near the boundary $\pa D$, and thus $\big(1+\tfrac{1}{6}\mathfrak{h}_{z\ov z}\hat g^{-1}\big)^{-1}$ remains bounded. This observation, when paired with equations (\ref{iden 1}) and (\ref{iden 3}), proves (i).

To prove (ii), note that equation~(\ref{iden 4}) implies
\begin{align}\label{iden 5}
\dfrac{1}{\rho^2} \ti g^{-1}\rho_z- \dfrac{1}{6Q}\rho_{z\ov z}^{-1}\rho_z =& \dfrac{1}{6}\bigg(\dfrac{1}{\rho^2}\hat g^{-1}\rho_z-\dfrac{1}{Q}\rho_{z\ov z}^{-1}\rho_z\bigg)\\ \nonumber
&-\dfrac{1}{36}\hat g^{-1}\bigg(1+\dfrac{1}{6}\mathfrak{h}_{z\ov z}\hat g^{-1}\bigg)^{-1}\mathfrak{h}_{z\ov z}\bigg(\dfrac{1}{\rho^2}\hat g^{-1}\rho_z\bigg).
\end{align}
The first term on the right hand side of the above equation is by (\ref{iden 3}) equal to $$\dfrac{1}{6}\dfrac{-|\rho|}{Q\big(|\rho|+Q\big)}\rho_{z\ov z}^{-1}\rho_z,$$
which tends to zero as $|\rho|\rightarrow 0$. The second term on the right hand side in (\ref{iden 5}) also goes to zero, not slower than $|\rho|\log\dfrac{1}{|\rho|}$, because of the following observations we have already made:
\begin{itemize}
\item[$\bullet$] $\hat g^{-1}$ tends to zero near $\pa D$ not slower than $|\rho|$,\\
\item[$\bullet$] $\bigg(1+\dfrac{1}{6}\mathfrak{h}_{z\ov z}\hat g^{-1}\bigg)^{-1}$ is bounded on $D$,\\
\item[$\bullet$] $\mathfrak{h}_{z\ov z}$ is bounded by $C_2\big|\log|\rho|\big|$ for some $C_2>0$, and finally,
\item[$\bullet$] $\dfrac{1}{\rho^2}\hat g^{-1}\rho_z$, by (\ref{iden 2}), is bounded on $D$.
\end{itemize}
This proves (ii).

Next, (iii) follows directly from (ii) with the following observation
\begin{align*}
\dfrac{1}{\rho^2} \ti g^{-1}|\rho_z|^2- \dfrac{1}{6}= \rho_{\ov z}\bigg(\dfrac{1}{\rho^2} \ti g^{-1}\rho_z- \dfrac{1}{6Q}\rho_{z\ov z}^{-1}\rho_z\bigg),
\end{align*}
and the fact that $|\rho_{\ov z}|$ is bounded on $D$.

\end{proof}

\section{Proof of the results}

We need an intermediate lemma to prove Theorem~\ref{local minima}:
\begin{lem}\label{rho compose c}
Let $c:\mbb R\rightarrow D$ be a geodesic for the Kobayashi--Fuks metric. Then for each $t\in \mbb R$ we have
\begin{align}\label{lemma rho of c}
(\rho\circ c)''(t) &=-\dfrac{1}{\rho\big(c(t)\big)}\Re\bigg[\big(\rho\mathfrak{h}_{z^2\ov z}-6\rho_{z^2\ov z}\big)\big(\ti g^{-1}\rho_z\big)\bigg|_{c(t)}\big(c'(t)\big)^2\bigg]\\ \nonumber
&+\dfrac{2}{\rho\big(c(t)\big)}\Re\bigg(\rho_z\big(c(t)\big)c'(t)\bigg)^2-\dfrac{2}{\rho\big(c(t)\big)}\Re \bigg[\big(\mathfrak{h}_{z\ov z}\ti g^{-1}\rho_z\big)\bigg|_{c(t)}\rho_z\big(c(t)\big)\big(c'(t)\big)^2\bigg]\\ \nonumber
&+2\rho_{z\ov z}\big(c(t)\big)\big|c'(t)\big|^2+2\bigg(1-\dfrac{6}{\rho^2}\ti g^{-1}|\rho_z|^2\bigg)\bigg|_{c(t)}\Re\bigg(\rho_{z^2}\big(c(t)\big)\big(c'(t)\big)^2\bigg).
\end{align}
\end{lem}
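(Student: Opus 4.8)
The plan is to start from the geodesic equation for the Kobayashi--Fuks metric. In one complex variable the metric $d\ti s^2_D=\ti g\,|dz|^2$ is conformal, equivalently K\"ahler, so its only nonvanishing Christoffel symbol is $\Gamma^z_{zz}=(\log\ti g)_z=\ti g^{-1}\ti g_z$, and a geodesic $c$ satisfies
$$c''(t) = -\big(\ti g^{-1}\ti g_z\big)\big|_{c(t)}\,\big(c'(t)\big)^2.$$
First I would differentiate $\rho\circ c$ twice by the Wirtinger chain rule. Writing $z=c(t)$, $\dot z=c'(t)$ and using that $\rho$ is real (so $\rho_{\ov z}=\ov{\rho_z}$),
$$(\rho\circ c)'(t) = 2\Re\big(\rho_z\dot z\big), \qquad (\rho\circ c)''(t) = 2\Re\big(\rho_{z^2}\dot z^2\big) + 2\rho_{z\ov z}|\dot z|^2 + 2\Re\big(\rho_z c''\big).$$
Substituting the geodesic equation replaces the last term by $-2\Re\big(\ti g^{-1}\ti g_z\rho_z\dot z^2\big)$, so the whole problem reduces to expressing $\ti g^{-1}\ti g_z\rho_z$ in terms of $\rho$, its derivatives, and $\mathfrak h$.

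Next I would compute $\ti g_z$ from the factorization $A_D=h_2/(-\rho)^6$. Taking logarithms gives $\log A_D=\mathfrak h-6\log(-\rho)$, whence, as in (\ref{iden 6}), $\ti g=\mathfrak h_{z\ov z}+6\hat g$ with $\hat g=\big(-\log(-\rho)\big)_{z\ov z}$. Differentiating once more in $z$,
$$\ti g_z = \mathfrak h_{z^2\ov z} + 6\hat g_z, \qquad \hat g_z = -\frac{\rho_{z^2\ov z}}{\rho} + \frac{2\rho_z\rho_{z\ov z}}{\rho^2} + \frac{\rho_{z^2}\rho_{\ov z}}{\rho^2} - \frac{2\rho_z^2\rho_{\ov z}}{\rho^3},$$
which is an elementary quotient-rule computation starting from (\ref{g hat}). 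Multiplying through by $\rho_z$ and using $\rho_z\rho_{\ov z}=|\rho_z|^2$ splits $\ti g^{-1}\ti g_z\rho_z$ into one piece carrying the holomorphic Hessian $\rho_{z^2}$ and a remainder built from $\mathfrak h_{z^2\ov z}$, $\rho_{z^2\ov z}/\rho$, $\rho_z^2\rho_{z\ov z}/\rho^2$, and $|\rho_z|^2\rho_z^2/\rho^3$.

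The final and most delicate step is to recombine these pieces into the stated grouping. The $\rho_{z^2}$ contribution coming from $\hat g_z$ merges with the direct term $2\Re(\rho_{z^2}\dot z^2)$ to produce the factor $2\big(1-\tfrac{6}{\rho^2}\ti g^{-1}|\rho_z|^2\big)\Re(\rho_{z^2}\dot z^2)$. For the remaining singular terms I would reverse-substitute the identity $\mathfrak h_{z\ov z}=\ti g-6\hat g$, i.e. $\tfrac{6\rho_{z\ov z}}{\rho}-\tfrac{6|\rho_z|^2}{\rho^2}=\mathfrak h_{z\ov z}-\ti g$: applied to the two terms carrying $\rho_z^2$ this is exactly what produces $-\tfrac{2}{\rho}\Re(\mathfrak h_{z\ov z}\ti g^{-1}\rho_z^2\dot z^2)$ and, through the cancellation $\ti g^{-1}\ti g=1$, the term $\tfrac{2}{\rho}\Re(\rho_z\dot z)^2$; the $\mathfrak h_{z^2\ov z}$ and $\rho_{z^2\ov z}/\rho$ terms assemble into $-\tfrac{1}{\rho}\Re\big[(\rho\mathfrak h_{z^2\ov z}-6\rho_{z^2\ov z})\ti g^{-1}\rho_z\dot z^2\big]$. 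The main obstacle is purely bookkeeping: keeping every coefficient and every factor of $2$ straight through this substitution so that the singular contributions collapse into precisely the four displayed terms. I expect this coefficient-tracking --- in particular the interplay between the factor $2$ from $2\Re(\rho_z c'')$ and the factors $6$ coming from the exponent in $(-\rho)^{-6}$ --- to be where all the care is required, while each individual differentiation is routine.
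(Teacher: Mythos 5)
Your route is step for step the paper's own proof: the chain rule expansion $(\rho\circ c)''=2\Re(\rho_{z^2}(c')^2)+2\rho_{z\ov z}|c'|^2+2\Re(\rho_z c'')$, the one-variable geodesic equation for the conformal metric, differentiation of $\ti g=\mathfrak{h}_{z\ov z}+6\hat g$ in $z$, and the back-substitution $6\rho_{z\ov z}/\rho-6|\rho_z|^2/\rho^2=\mathfrak{h}_{z\ov z}-\ti g$, which is exactly the paper's substitution $\rho_{z\ov z}=-\tfrac{\rho}{6}(\ti g-\mathfrak{h}_{z\ov z})+|\rho_z|^2/\rho$. The one genuine issue sits precisely at the factor of $2$ you flagged as delicate, and it does not come out the way you claim. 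You start from the standard conformal geodesic equation $c''=-\big(\ti g^{-1}\ti g_z\big)(c)\,(c')^2$, whereas the paper's proof derives, from its complexified Euler--Lagrange equation, $-c''=\tfrac{1}{2}\big(\ti g^{-1}\ti g_z\big)(c)\,(c')^2$; these differ by a factor $2$, and yours is the correct one (test on $\lambda=y^{-2}$ in the upper half-plane: $c(t)=ie^{t}$ satisfies $c''=-\lambda^{-1}\lambda_z\,(c')^2$, not the halved equation). But then $2\Re(\rho_z c'')=-2\Re\big(\ti g^{-1}\ti g_z\,\rho_z\,(c')^2\big)$, and after your back-substitution the singular terms acquire coefficients $-\tfrac{2}{\rho}$, $+\tfrac{4}{\rho}$, $-\tfrac{4}{\rho}$ — not the displayed $-\tfrac{1}{\rho}$, $+\tfrac{2}{\rho}$, $-\tfrac{2}{\rho}$ — while the $\rho_{z^2}$-piece contributes $-\tfrac{12}{\rho^2}\ti g^{-1}|\rho_z|^2\,\Re(\rho_{z^2}(c')^2)$, which merges with $2\Re(\rho_{z^2}(c')^2)$ into exactly the displayed $2\big(1-\tfrac{6}{\rho^2}\ti g^{-1}|\rho_z|^2\big)\Re(\rho_{z^2}(c')^2)$. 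So your assertion that everything assembles ``into precisely the four displayed terms'' cannot be realized from your own (correct) starting equation: three of the four terms come out doubled.

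The blame here lies chiefly with the printed statement, which is internally inconsistent by a factor of $2$: the paper's halved equation (\ref{iden 7}) does reproduce the displayed $-\tfrac{1}{\rho},\tfrac{2}{\rho},-\tfrac{2}{\rho}$, but it would then force the last coefficient to be $2\big(1-\tfrac{3}{\rho^2}\ti g^{-1}|\rho_z|^2\big)$, which by Lemma~\ref{asym g tilde}(iii) tends to $1$, not $0$, at $\pa D$ — and that would destroy the decay used in the proof of Theorem~\ref{local minima}, where the term is rewritten as $12\big(\tfrac16-\tfrac{1}{\rho^2}\ti g^{-1}|\rho_z|^2\big)\Re(\rho_{z^2}v_k^2)$. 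Your normalization is the one under which all four terms are mutually consistent and that rewriting is valid. The concrete repair to your write-up is therefore: keep your geodesic equation, carry the factor $2$ from $2\Re(\rho_z c'')$ honestly through the regrouping, and state the lemma with the first three coefficients doubled ($-\tfrac{2}{\rho}$, $+\tfrac{4}{\rho}$, $-\tfrac{4}{\rho}$) and the last two terms as displayed; since every term other than $2\rho_{z\ov z}|c'|^2$ still tends to zero near $\pa D$ by Lemmas~\ref{asym h} and~\ref{asym g tilde} (and the $\tfrac{4}{\rho}\Re$-term moved to the left-hand side keeps its sign), Theorem~\ref{local minima} and everything downstream go through unchanged.
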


\begin{proof}
Let us consider the Lagrange function in dimension one
\begin{align*}
\mathcal{L}\big(c,c'\big):=\dfrac{1}{2}\ti g(c)\big(c'\big)^2
\end{align*}
related to the Kobayashi--Fuks metric. One can write the Euler-Lagrange equation in the following complexified form
\begin{align}\label{Euler Lagrange}
\dfrac{d}{dt}\bigg(\dfrac{\pa \mathcal{L}}{\pa c'}\bigg)=\dfrac{\pa \mathcal{L}}{\pa c}.
\end{align}
Equation~(\ref{Euler Lagrange}) implies
\begin{align}\label{iden 7}
-c''=\dfrac{1}{2}\bigg(\ti g^{-1}\dfrac{\pa \ti g}{\pa z}\bigg)(c)\big(c'\big)^2.
\end{align}
To prove the expression in Lemma~\ref{rho compose c}, we start by computing
\begin{align}\label{rho of c}
(\rho\circ c)''(t)=2\Re\bigg(\dfrac{\pa \rho}{\pa z}\big(c(t)\big)c''(t)\bigg)+2\Re\bigg(\dfrac{\pa^2\rho}{\pa z^2}\big(c(t)\big)\big(c'(t)\big)^2\bigg)+2 \dfrac{\pa^2\rho}{\pa z\pa \ov z}\big(c(t)\big)\big|c'(t)\big|^2.
\end{align}
In order to compute $\pa \ti g/\pa z$, we will use the following relation \big(see (\ref{iden 6})\big)
\begin{align}\label{g tilde}
\ti g=\mathfrak{h}_{z\ov z}+6\bigg(\dfrac{\rho_{z\ov z}}{-\rho}+\dfrac{|\rho_z|^2}{\rho^2}\bigg),
\end{align}
and therefore obtain
\begin{align*}
\dfrac{\pa \ti g}{\pa z}=\mathfrak{h}_{z^2\ov z}+6\bigg(\dfrac{2}{\rho^2}\rho_{z\ov z}\rho_z-\dfrac{1}{\rho}\rho_{z^2\ov z}+\dfrac{1}{\rho^2}\rho_{z^2}\rho_{\ov z}-\dfrac{2}{\rho^3}\rho_z|\rho_z|^2\bigg).
\end{align*}
Using this expression in (\ref{iden 7}) one gets
\begin{align}\label{iden 8}
-2c''=\bigg(\mathfrak{h}_{z^2\ov z}-\dfrac{6}{\rho}\rho_{z^2\ov z}\bigg)\ti g^{-1}\big(c'\big)^2+\dfrac{12}{\rho^2}\ti g^{-1}\rho_{z\ov z}\rho_z\big(c'\big)^2+\dfrac{6}{\rho^2}\ti g^{-1}\rho_{z^2}\rho_{\ov z}\big(c'\big)^2\\ \nonumber
-\dfrac{12}{\rho^3}\rho_{\ov z}\ti g^{-1}\big(\rho_z c'\big)^2.
\end{align}
Using the formula \big(which can be obtained from (\ref{g tilde})\big)
\begin{align*}
\rho_{z\ov z}=-\dfrac{\rho}{6}\big(\ti g-\mathfrak{h}_{z\ov z}\big)+\dfrac{|\rho_z|^2}{\rho},
\end{align*}
and substituting the above value of $\rho_{z\ov z}$ in (\ref{iden 8}), we arrive at 
\begin{align*}
-2c''=\bigg(\mathfrak{h}_{z^2\ov z}-\dfrac{6}{\rho}\rho_{z^2\ov z}\bigg)\ti g^{-1}\big(c'\big)^2+\bigg(-\dfrac{2}{\rho}+\dfrac{2}{\rho}\ti g^{-1}\mathfrak{h}_{z\ov z}\bigg)\rho_z \big(c'\big)^2+\dfrac{6}{\rho^2}\ti g^{-1}\rho_{z^2}\rho_{\ov z}\big(c'\big)^2.
\end{align*}
Finally, substituting this expression of $c''$ in equation~(\ref{rho of c}), we obtain the desired formula~(\ref{lemma rho of c}).

\end{proof}

With Lemma~\ref{rho compose c} and the boundary behavior established in Section~\ref{bdry behav}, we now have enough machinery to prove Theorem~\ref{local minima}.

\begin{proof}[Proof of Theorem~\ref{local minima}]
Assume that the assertion is not true. Then for each $k\in \mbb N$, there exists geodesic $c_k$ for the Kobayashi--Fuks metric satisfying 
\begin{align*}
\rho\big(c_k(0)\big)>-\dfrac{1}{k},\quad (\rho\circ c_k)'(0)=0 \quad \text{and}\quad (\rho\circ c_k)''(0)\leq 0.
\end{align*}
Let us denote, for each $k$,
\begin{align*}
a_k:=c_k(0),\quad v_k:=\dfrac{c_k'(0)}{\big|c_k'(0)\big|},\quad \text{and}\quad b_k:=\dfrac{(\rho\circ c_k)''(0)}{\big|c_k'(0)\big|^2}\leq 0.
\end{align*}
Passing to a subsequence, if necessary, we assume that the sequence of points $a_k$ converges to a point $a_0$ on the boundary $\pa D$, and the unit vectors $v_k$ converge to a unit vector $v_0\in \mbb C$. Now, making use of (\ref{lemma rho of c}), we can compute
\begin{align}\label{iden 9}
b_k-\dfrac{2}{\rho(a_k)}\Re\big(\rho_z(a_k)v_k\big)^2=-\Re\bigg[\big(\rho\mathfrak{h}_{z^2\ov z}-6\rho_{z^2\ov z}\big)(a_k)\bigg(\dfrac{1}{\rho}\ti g^{-1}\rho_z\bigg)(a_k)v_k^2\bigg]\\ \nonumber
-2\Re\bigg[\mathfrak{h}_{z\ov z}(a_k)\bigg(\dfrac{1}{\rho}\ti g^{-1}\rho_z\bigg)(a_k)\rho_z(a_k)v_k^2\bigg]+2\rho_{z\ov z}(a_k)|v_k|^2\\ \nonumber
+12\bigg(\dfrac{1}{6}-\dfrac{1}{\rho^2}\ti g^{-1}|\rho_z|^2\bigg)(a_k)\Re\bigg(\rho_{z^2}(a_k)v_k^2\bigg).
\end{align}
Observe that all other terms, except for $2\rho_{z\ov z}(a_k)|v_k|^2$, on the right hand side of equation~(\ref{iden 9}) tend to zero as $a_k$ approaches the boundary point $a_0$, by virtue of Lemma~\ref{asym h} and Lemma~\ref{asym g tilde}. Therefore,
\begin{align}\label{iden 10}
\lim_{k\to \infty}\bigg(b_k-\dfrac{2}{\rho(a_k)}\Re\big(\rho_z(a_k)v_k\big)^2\bigg)=2\rho_{z\ov z}(a_0)|v_0|^2.
\end{align}
Because we have $(\rho\circ c_k)'(0)=0$, one can check $\Re\big(\rho_z(a_k)v_k\big)=0$. Hence the left hand side of (\ref{iden 10}) is a limit of non-positive real numbers, while the right hand side of (\ref{iden 10}) is strictly positive. This is a contradiction, which proves our theorem.

\end{proof}

Theorem~\ref{main thm} will now follow from a more general result of G. Herbort on the existence of geodesic spirals in a complete Riemannian manifold setting. Theorem~\ref{local minima}, in conjuction with the following result of Herbort, immediately proves Theorem~\ref{main thm}.

\begin{lem}[Herbort]\label{Herbort}
Let $(X,h)$ be a complete Riemannian manifold which possesses an infinite sheeted universal cover. Let $x_0$ be a point in $X$ such that there is no closed geodesic passing through $x_0$. If there exists a compact subset $K$ of $X$ such that each geodesic loop passing through $x_0$ is contained in $K$, then there is a geodesic spiral passing through $x_0$.
\end{lem}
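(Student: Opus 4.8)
The plan is to realize the desired spiral as a limit of length-minimizing geodesic loops based at $x_0$ whose lengths tend to infinity, using that all such loops are trapped in the fixed compact set $K$. Since $X$ has an infinite sheeted universal cover, the deck group $\pi_1(X,x_0)$ is infinite, so I can fix a sequence $\{\alpha_k\}$ of pairwise distinct nontrivial elements. Let $p:\ti X\to X$ be the universal covering with a chosen lift $\ti x_0$ of $x_0$; the pulled-back metric makes $\ti X$ complete, so Hopf--Rinow supplies a minimizing geodesic in $\ti X$ from $\ti x_0$ to $\alpha_k\cdot\ti x_0$. Projecting it by $p$ produces, for each $k$, a geodesic loop $\ell_k$ based at $x_0$ of some length $L_k$ that minimizes length in the based homotopy class $\alpha_k$. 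As $\ell_k$ is a segment of a complete (nontrivial) geodesic meeting $x_0$ at both ends, the hypothesis guarantees $\ell_k\subset K$.

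Next I would show $L_k\to\infty$. If not, then after passing to a subsequence the $L_k$ stay bounded; parametrizing each $\ell_k$ on $[0,1]$ with constant speed $L_k$ gives a family lying in the compact set $K$ with uniformly bounded speed, so Arzel\`a--Ascoli yields uniform convergence of a further subsequence to a loop $\ell_\infty$ based at $x_0$. For $k$ large, $\ell_k$ is within the injectivity radius of $\ell_\infty$ along $K$, so the geodesic homotopy joining corresponding points (which fixes the common basepoint $x_0$) shows $\ell_k$ is homotopic rel $x_0$ to $\ell_\infty$; hence all such $\ell_k$ represent one and the same class, contradicting the distinctness of the $\alpha_k$. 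This forces $L_k\to\infty$.

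Then, writing $v_k:=\ell_k'(0)$ for the unit initial velocity and $\gamma_{v_k}$ for the complete geodesic it determines, we have $\gamma_{v_k}|_{[0,L_k]}=\ell_k$. By compactness of the unit sphere in $T_{x_0}X$, pass to a subsequence with $v_k\to v_0$ and set $c:=\gamma_{v_0}$, the complete geodesic with $c(0)=x_0$ and $c'(0)=v_0$. Continuous dependence of geodesics on initial data gives $\gamma_{v_k}\to c$ uniformly on compact subsets of $\mbb R$. Fixing any $T>0$ and taking $k$ large enough that $L_k>T$, we get $\gamma_{v_k}(t)=\ell_k(t)\in K$ for $t\in[0,T]$; letting $k\to\infty$ and using that $K$ is closed shows $c(t)\in K$ for all $t\in[0,T]$, and since $T$ was arbitrary, $c|_{[0,\infty)}\subset K$.

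Finally, $c$ is nontrivial because $|v_0|=1$, and $c$ cannot be closed: a closed geodesic through $c(0)=x_0$ would contradict the assumption that no closed geodesic passes through $x_0$. Hence $c$ is a non-closed complete geodesic with $c(0)=x_0$ and $c|_{[0,\infty)}\subset K$, that is, a geodesic spiral passing through $x_0$, as required. I expect the main obstacle to be the second step, namely rigorously ruling out an accumulation of distinct homotopy classes among loops of bounded length; this combines the compactness of $K$ (Arzel\`a--Ascoli) with the local fact that two $C^0$-close loops sharing a basepoint are homotopic rel that basepoint, and care is needed to keep the limiting loop based at $x_0$ throughout.
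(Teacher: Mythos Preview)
The paper does not supply its own proof of this lemma; it simply refers the reader to Herbort's original article. Your argument is correct and is essentially the natural proof one would give (and, as far as I know, close in spirit to Herbort's). One small simplification for the step you flagged as the main obstacle: since $L_k=d_{\ti X}(\ti x_0,\alpha_k\cdot\ti x_0)$ and the deck transformation group acts properly discontinuously on the complete manifold $\ti X$, a uniform bound on $L_k$ would force infinitely many distinct $\alpha_k$ to map $\ti x_0$ into a fixed compact ball, which is impossible. This bypasses the Arzel\`a--Ascoli and injectivity-radius argument entirely and makes the second step immediate.
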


For a proof of the above lemma one can refer to \cite{Herbort1983}. Now, in order to prove Theorem~\ref{main thm}, note that the universal cover of $D$ is infinitely sheeted as $D$ is not simply connected. Next, consider
\begin{align*}
\ep_1=\min\big\{\ep(D), -\rho(z_0)\big\}\quad \text{and}\quad K=\bigg\{z\in D: \rho(z)\leq -\dfrac{\ep_1}{2}\bigg\}.
\end{align*}
Theorem~\ref{local minima} then implies that each geodesic loop for the Kobayashi--Fuks metric passing through $z_0$ must be contained inside the compact set $K$. To see this, let us consider any geodesic loop $c|_{[t_1,t_2]}$ passing through $z_0$, i.e. $c(t_1)=c(t_2)=z_0$. If this loop goes outside of $K$, then there would exist $t_*\in (t_1,t_2)$ where the real valued function $\rho\circ c$ attends a local maxima, and that $c(t_*)\in D\setminus K$. Therefore, by the second derivative test $(\rho\circ c)''(t_*)\leq 0$, which contradicts Theorem~\ref{local minima}. So we can now apply Lemma~\ref{Herbort} in our set-up to conclude the existence of a geodesic spiral passing through $z_0$, which proves our result.
\qed\\

\noindent \textbf{Concluding remarks:} Using similar lines of argument employed in this article, I feel one can prove the existence of geodesic spirals for the Kobayashi--Fuks metric on a smoothly bounded strongly pseudoconvex domain, carrying infinitely sheeted universal cover, in higher dimensions as well. For this, one has to carefully define similar functions $h_2(z)$ for domains in $\mbb C^n$ and then study the asymptotic behavior of some of the partial derivatives of $\log h_2(z)$ as was done in Section~\ref{bdry behav}. Since the Kobayashi--Fuks metric is defined using the Ricci tensor of the Bergman metric, which involves computing of the determinant of certain $(n\times n)$ matrix and then obtaining the second order partial derivatives of that determinant, the computations become huge. So the higher dimensional case is beyond the scope of this note. \\

\noindent \textbf{Funding} The research of the author was supported by the Institute Post-doctoral Fellowship program at Indian Institute of Technology Bombay (Institute ID: 20002836).\\
 
\noindent \textbf{Data availability} Data sharing is not applicable to this article as no datasets were generated or analysed during the current study.\\

\noindent \textbf{Conflict of interest} The author declares that he has no conflict of interest.

\begin{bibdiv}
\begin{biblist}

\bib{Borah-Kar}{article}{
   author={Borah, D.},
   author={Kar, D.},
   title={Some remarks on the Kobayashi-Fuks metric on strongly pseudoconvex
   domains},
   journal={J. Math. Anal. Appl.},
   volume={512},
   date={2022},
   number={2},
   pages={Paper No. 126162, 24},
   issn={0022-247X},
   review={\MR{4396033}},
   doi={10.1016/j.jmaa.2022.126162},
}

\bib{Monvel-Sjostrand}{article}{
   author={Boutet de Monvel, L.},
   author={Sj\"{o}strand, J.},
   title={Sur la singularit\'{e} des noyaux de Bergman et de Szeg\H{o}},
   language={French},
   conference={
      title={Journ\'{e}es: \'{E}quations aux D\'{e}riv\'{e}es Partielles de Rennes (1975)},
   },
   book={
      publisher={Soc. Math. France, Paris},
   },
   date={1976},
   pages={123--164. Ast\'{e}risque, No. 34-35},
   review={\MR{0590106}},
}

\bib{Conway1995}{book}{
   author={Conway, J. B.},
   title={Functions of one complex variable. II},
   series={Graduate Texts in Mathematics},
   volume={159},
   publisher={Springer-Verlag, New York},
   date={1995},
   pages={xvi+394},
   review={\MR{1344449}},
   doi={10.1007/978-1-4612-0817-4},
}

\bib{Dem87}{article}{
   author={Demailly, J.-P.},
   title={Mesures de Monge-Amp\`ere et mesures pluriharmoniques},
   language={French},
   journal={Math. Z.},
   volume={194},
   date={1987},
   number={4},
   pages={519--564},
   issn={0025-5874},
   review={\MR{881709}},
   doi={10.1007/BF01161920},
}

\bib{Dinew11}{article}{
   author={Dinew, \.{Z}.},
   title={On the Bergman representative coordinates},
   journal={Sci. China Math.},
   volume={54},
   date={2011},
   number={7},
   pages={1357--1374},
   issn={1674-7283},
   review={\MR{2817571}},
   doi={10.1007/s11425-011-4243-4},
}

\bib{Dinew13}{article}{
   author={Dinew, \.{Z}.},
   title={On the completeness of a metric related to the Bergman metric},
   journal={Monatsh. Math.},
   volume={172},
   date={2013},
   number={3-4},
   pages={277--291},
   issn={0026-9255},
   review={\MR{3127995}},
   doi={10.1007/s00605-013-0501-6},
}

\bib{Fefferman}{article}{
   author={Fefferman, C.},
   title={The Bergman kernel and biholomorphic mappings of pseudoconvex
   domains},
   journal={Invent. Math.},
   volume={26},
   date={1974},
   pages={1--65},
   issn={0020-9910},
   review={\MR{350069}},
   doi={10.1007/BF01406845},
}

\bib{Fuks66}{article}{
   author={Fuks, B. A.},
   title={The Ricci curvature of the Bergman metric invariant with respect
   to biholomorphic mappings},
   language={Russian},
   journal={Dokl. Akad. Nauk SSSR},
   volume={167},
   date={1966},
   pages={996--999},
   issn={0002-3264},
   review={\MR{0196768}},
}

\bib{Herbort1983}{article}{
   author={Herbort, G.},
   title={On the geodesics of the Bergman metric},
   journal={Math. Ann.},
   volume={264},
   date={1983},
   number={1},
   pages={39--51},
   issn={0025-5831},
   review={\MR{709860}},
   doi={10.1007/BF01458049},
}

\bib{Jarn-Pflug-2013}{book}{
   author={Jarnicki, M.},
   author={Pflug, P.},
   title={Invariant distances and metrics in complex analysis},
   series={De Gruyter Expositions in Mathematics},
   volume={9},
   edition={Second extended edition},
   publisher={Walter de Gruyter GmbH \& Co. KG, Berlin},
   date={2013},
   pages={xviii+861},
   review={\MR{3114789}},
   doi={10.1515/9783110253863},
}

\bib{Kob59}{article}{
   author={Kobayashi, S.},
   title={Geometry of bounded domains},
   journal={Trans. Amer. Math. Soc.},
   volume={92},
   date={1959},
   pages={267--290},
   issn={0002-9947},
   review={\MR{112162}},
   doi={10.2307/1993156},
}

\end{biblist}
\end{bibdiv}

\end{document}